\documentclass{amsart}

\usepackage{amsfonts}
\usepackage{ifthen}
\usepackage{amsthm}
\usepackage{amsmath}
\usepackage{graphicx}
\usepackage{amscd,amssymb,amsthm}
\usepackage{graphicx}

\newcounter{minutes}
\setcounter{minutes}{\time}
\divide\time by 60
\newcounter{hours}
\setcounter{hours}{\time}
\multiply\time by 60 \addtocounter{minutes}{-\time}

\setlength{\paperwidth}{210mm} \setlength{\paperheight}{297mm}
\setlength{\oddsidemargin}{0mm} \setlength{\evensidemargin}{0mm}
\setlength{\topmargin}{-20mm} \setlength{\headheight}{10mm}
\setlength{\headsep}{13mm} \setlength{\textwidth}{160mm}
\setlength{\textheight}{240mm} \setlength{\footskip}{0mm}
\setlength{\marginparwidth}{0mm} \setlength{\marginparsep}{0mm}

\newtheorem{lemma}{Lemma}
\newtheorem{theorem}{Theorem}

\newcommand{\Real}{\operatorname{Re}}

\keywords{Lommel functions of the first kind; Struve functions; univalent, starlike functions; radius of starlikeness; zeros of Lommel functions of the first kind; zeros of Struve functions; trigonometric integral; Laguerre inequality.}
\subjclass[2010]{30C45, 30C15}

\begin{document}
\title[Radii of convexity of Lommel and Struve functions]{Radii of convexity
of some Lommel and Struve functions}
\author[\'A. Baricz]{\'Arp\'ad Baricz}
\address{Department of Economics, Babe\c{s}-Bolyai University, Cluj-Napoca
400591, Romania}
\address{Institute of Applied Mathematics, \'Obuda University, 1034 Budapest, Hungary}
\email{bariczocsi@yahoo.com}

\author[N. Ya\u{g}mur]{Nihat Ya\u{g}mur}
\address{Department of Mathematics, Erzincan University, Erzincan 24000,
Turkey}
\email{nhtyagmur@gmail.com}

\begin{abstract}
The radii of convexity of some Lommel and Struve functions of the first kind are determined.
For both of Lommel and Struve functions three different
normalizations are applied in such a way that the resulting functions are
analytic in the unit disk of the complex plane. Some results on the zeros of the derivatives of some Lommel and Struve
functions of the first kind are also deduced, which may be of independent interest.
\end{abstract}

\maketitle

\def\thefootnote{}
\footnotetext{ \texttt{File:~\jobname .tex,
         printed: \number\year-\number\month-\number\day,
          \thehours.\ifnum\theminutes<10{0}\fi\theminutes}
} \makeatletter\def\thefootnote{\@arabic\c@footnote}\makeatother



\section{\bf Introduction}

Let $\mathbb{D}_{r}$ be the open disk $\left\{ {z\in \mathbb{C}:\left\vert
z\right\vert <r}\right\} ,$ where $r>0.$ As usual, with $\mathcal{A}$ we denote the class of analytic functions $f:\mathbb{D}%
_{r}\rightarrow \mathbb{C}$ which satisfy the usual normalization conditions
$f(0)=f^{\prime }(0)-1=0.$ Let us denote by $\mathcal{S}$ the class of functions
belonging to $\mathcal{A}$ which are univalent in $\mathbb{D}_{r}$ and let $%
\mathcal{K}(\alpha )$ be the subclass of $\mathcal{S}$ consisting of
functions which are convex of order $\alpha $ in $\mathbb{D}_{r},$ where $%
0\leq \alpha <1.$ The analytic characterization of this class of functions
is
\begin{equation*}
\mathcal{K}(\alpha )=\left\{ f\in \mathcal{S}\ :\ \Real \left( 1+\frac{%
zf^{\prime \prime }(z)}{f^{\prime }(z)}\right) >\alpha \ \ \mathrm{for\ all}%
\ \ z\in \mathbb{D}_{r}\right\} ,
\end{equation*}%
and we adopt the convention $\mathcal{K}=\mathcal{K}(0)$. The real number
\begin{equation*}
r_{\alpha }^{c}(f)=\sup \left\{ r>0\ :\ \Real \left( 1+\frac{zf^{\prime \prime
}(z)}{f^{\prime }(z)}\right) >\alpha \ \ \mathrm{\ for\ all}\ \ z\in \mathbb{%
D}_{r}\right\} ,
\end{equation*}%
is called the radius of convexity of order $\alpha $ of the function $f.$
It is worth to mention that $r^{c}(f)=r_{0}^{c}(f)$ is the largest radius such that the image
region $f(\mathbb{D}_{r^{c}(f)})$ is a convex domain in ${\mathbb{C}}$.

In this paper our aim is to consider two classical special functions, the Lommel function of the
first kind $s_{\mu ,\nu }$ and the Struve function of the first kind $%
\mathbf{H}_{\nu }$. They are explicitly defined in terms of the
hypergeometric function $\,_{1}F_{2}$ by
\begin{equation*}
s_{\mu ,\nu }(z)=\frac{z^{\mu +1}}{(\mu -\nu +1)(\mu +\nu +1)}%
\,_{1}F_{2}\left( 1;\frac{\mu -\nu +3}{2},\frac{\mu +\nu +3}{2};-\frac{z^{2}%
}{4}\right) ,\ \ \frac{1}{2}(-\mu \pm \nu -3)\not\in \mathbb{N},
\end{equation*}%
and
\begin{equation*}
\mathbf{H}_{\nu }(z)=\frac{\left( \frac{z}{2}\right) ^{\nu +1}}{\sqrt{\frac{%
\pi }{4}}\,\Gamma \left( \nu +\frac{3}{2}\right) }\,_{1}F_{2}\left( 1;\frac{3%
}{2},\nu +\frac{3}{2};-\frac{z^{2}}{4}\right) ,\ \ -\nu -\frac{3}{2}\not\in
\mathbb{N}.
\end{equation*}%
A common feature of these functions is that they are solutions of
inhomogeneous Bessel differential equations \cite{Wat}. Indeed, the Lommel
function of the first kind $s_{\mu ,\nu }$ is a solution of
\begin{equation*}
z^{2}w''(z)+zw'(z)+(z^{2}-{\nu }^{2})w(z)=z^{\mu +1}
\end{equation*}%
while the Struve function $\mathbf{H}_{\nu }$ obeys
\begin{equation*}
z^{2}w''(z)+zw'(z)+(z^{2}-{\nu }^{2})w(z)=\frac{%
4\left( \frac{z}{2}\right) ^{\nu +1}}{\sqrt{\pi }\Gamma \left( \nu +\frac{1}{%
2}\right) }.
\end{equation*}%
We refer to Watson's treatise \cite{Wat} for comprehensive information about
these functions and recall here briefly some contributions. In 1970 Steinig 
\cite{steinig} investigated the real zeros of the Struve function $\mathbf{H}_{\nu},$ while in 1972 he
\cite{stein} examined the sign of $s_{\mu ,\nu }(z)$ for real $\mu ,\nu $
and positive $z$. He showed, among other things, that for $\mu <\frac{1}{2}$
the function $s_{\mu ,\nu }$ has infinitely many changes of sign on $%
(0,\infty )$. In 2012 Koumandos and Lamprecht \cite{Kou} obtained sharp
estimates for the location of the zeros of $s_{\mu -\frac{1}{2},\frac{1}{2}}$
when $\mu \in (0,1)$. The Tur\'{a}n type inequalities for $s_{\mu -\frac{1}{2%
},\frac{1}{2}}$ were established in \cite{Bar2} while those for the Struve
function were proved in \cite{BPS}. Geometric properties of the Lommel function
$s_{\mu -\frac{1}{2},\frac{1}{2}}$ and of the Struve
function $\mathbf{H}_{\nu }$ were obtained in \cite{bdoy,Bar3} and in \cite{H-Ny,N-H}, respectively.
Motivated by those results, in this paper we are interested on the radii of convexity of certain
analytic functions related to the classical special functions under
discussion. Since neither $s_{\mu -\frac{1}{2},\frac{1}{2}}$, nor $\mathbf{H}%
_{\nu }$ belongs to $\mathcal{A}$, first we perform some natural
normalizations, as in \cite{bdoy}. We define three functions related to $s_{\mu -\frac{1}{%
2},\frac{1}{2}}$:
\begin{equation*}
f_{\mu }(z)=f_{\mu -\frac{1}{2},\frac{1}{2}}(z)=\left( \mu (\mu +1)s_{\mu -%
\frac{1}{2},\frac{1}{2}}(z)\right) ^{\frac{1}{\mu +\frac{1}{2}}},
\end{equation*}%
\begin{equation*}
g_{\mu }(z)=g_{\mu -\frac{1}{2},\frac{1}{2}}(z)=\mu (\mu +1)z^{-\mu +\frac{1%
}{2}}s_{\mu -\frac{1}{2},\frac{1}{2}}(z)
\end{equation*}%
and%
\begin{equation*}
h_{\mu }(z)=h_{\mu -\frac{1}{2},\frac{1}{2}}(z)=\mu (\mu +1)z^{\frac{3-2\mu
}{4}}s_{\mu -\frac{1}{2},\frac{1}{2}}(\sqrt{z}).
\end{equation*}%
Similarly, we associate with $\mathbf{H}_{\nu }$ the functions
\begin{equation*}
u_{\nu }(z)=\left( \sqrt{\pi }2^{\nu }\Gamma \left( \nu +\frac{3}{2}\right)
\mathbf{H}_{\nu }(z)\right) ^{\frac{1}{\nu +1}},
\end{equation*}%
\begin{equation*}
v_{\nu }(z)=\sqrt{\pi }2^{\nu }z^{-\nu }\Gamma \left( \nu +\frac{3}{2}%
\right) \mathbf{H}_{\nu }(z)
\end{equation*}%
and
\begin{equation*}
w_{\nu }(z)=\sqrt{\pi }2^{\nu }z^{\frac{1-\nu }{2}}\Gamma \left( \nu +\frac{3%
}{2}\right) \mathbf{H}_{\nu }(\sqrt{z}).
\end{equation*}

Clearly the functions $f_{\mu}$, $g_{\mu}$, $h_{\mu}$, $u_{\nu }$, $v_{\nu }$ and $w_{\nu }$ belong to the class $\mathcal{A}$. The
main results in the present paper concern the exact values of the radii of
convexity for these six functions, for some ranges of the parameters. The paper
is organized as follows: in this section we give some preliminaries which we will need in the sequel.
The next section contains the radii of convexity of the functions $f_{\mu},$ $g_{\mu}$ and $h_{\mu},$ while
section 3 contains the radii of convexity of the functions $u_{\nu }$, $v_{\nu }$ and $w_{\nu }.$ As we can see in the
proof of the first parts of the main theorems the interlacing property of the zeros of the Lommel and Struve functions and their derivatives play an important role. These interlacing properties actually yield that under some conditions on the parameters the zeros of the derivatives of Lommel and Struve functions are all real. Section 4 contains a discussion about these things and also some alternative proofs based on a famous result of P\'olya. This paper is a direct continuation of the recent works \cite{bdoy,bdm,BAS}.

The following preliminary result is the content of Lemmas 1 and 2 in \cite{Bar2} and is one of the key tools
in the proof of our main results.

\begin{lemma}\label{lem1}
Let
\begin{equation*}
\varphi _{k}(z)=\,_{1}F_{2}\left( 1;\frac{\mu -k+2}{2},\frac{\mu -k+3}{2};-%
\frac{z^{2}}{4}\right)
\end{equation*}%
where ${z\in \mathbb{C}}$, ${\mu \in \mathbb{R}}$ and ${k\in }\left\{
0,1,\dots \right\} $ such that ${\mu -k}$ is not in $\left\{ 0,-1,\dots
\right\} $. Then, $\varphi _{k}$ is an entire function of order $\rho =1.$ Consequently, the Hadamard's
factorization of $\varphi _{k}$ is of the form%
$$
\varphi _{k}(z)=\prod\limits_{n\geq 1}\left( 1-\frac{z^{2}}{z_{\mu
,k,n}^{2}}\right) ,  \label{1.6}
$$
where $z_{\mu ,k,n}$ is the $n$th positive zero of the
function $\varphi _{k}$ and the infinite product is absolutely convergent.
Moreover, for $z,$ ${\mu }$ and $k$ as above, we have
\begin{equation*}
(\mu -k+1)\varphi _{k+1}(z)=(\mu -k+1)\varphi _{k}(z)+z\varphi _{k}^{\prime
}(z),
\end{equation*}%
\begin{equation*}
\sqrt{z}s_{\mu -k-\frac{1}{2},\frac{1}{2}}(z)=\frac{z^{\mu -k+1}}{(\mu
-k)(\mu -k+1)}\varphi _{k}(z).
\end{equation*}
\end{lemma}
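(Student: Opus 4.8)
The plan is to treat the analytic growth statement and the Hadamard product as the conceptual core, and the two displayed identities as formal consequences of the series definition. First I would write $\varphi_k$ as an honest power series: setting $a=\mu-k$ and using $(1)_n=n!$, the $\,_1F_2$ collapses to
\[
\varphi_k(z)=\sum_{n\ge 0}\frac{(-1)^n}{\left(\frac{a+2}{2}\right)_n\left(\frac{a+3}{2}\right)_n}\,\frac{z^{2n}}{4^n},
\]
an even entire function with $\varphi_k(0)=1$. Denoting by $c_n$ the coefficient of $z^{2n}$ and applying Stirling's formula to the two Pochhammer symbols, I would obtain $-\log|c_n|=2n\log n+O(n)$. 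Writing $\varphi_k(z)=\psi_k(z^2)$, this shows that $\psi_k$ has order $\tfrac12$ in its variable, so $\varphi_k$ has order $\rho=1$ in $z$; equivalently one reads $\rho=1$ directly off the Taylor coefficients $a_m$ of $\varphi_k$ via $\rho=\limsup_{m\to\infty} m\log m/(-\log|a_m|)$.

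With the order known, Hadamard's factorization theorem applies to $\psi_k$. Its order $\tfrac12$ is strictly below $1$, so the exponent of convergence of its zeros is at most $\tfrac12$, the genus is $0$, and the exponential factor $e^{Q}$ has $\deg Q\le \tfrac12$, hence is constant; since $\psi_k(0)=1$ that constant is $1$ and no zero lies at the origin. This gives the absolutely convergent canonical product $\psi_k(w)=\prod_{n\ge1}(1-w/w_n)$ over the zeros $w_n$ of $\psi_k$, and substituting $w=z^2$ reproduces the asserted product for $\varphi_k$ --- provided each $w_n$ equals a positive real $z_{\mu,k,n}^2$. The step I expect to be the main obstacle is precisely this reality and positivity of the zeros, which the growth estimate alone does not deliver. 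To secure it I would use the trigonometric integral representation of $s_{\mu-k-\frac12,\frac12}$ (equivalently of $\varphi_k$) available for the stated parameters --- reflecting that the Bessel index $\nu=\tfrac12$ makes the homogeneous solutions elementary --- to exhibit $\varphi_k$ as a cosine transform of a nonnegative kernel, and then invoke P\'olya's theorem together with the Laguerre inequality (the circle of ideas of Section 4) to place $\varphi_k$ in the Laguerre--P\'olya class and conclude that all its zeros are real; positivity of the $w_n$ then follows since $\varphi_k$ is even.

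The two remaining identities I would verify directly from the series. For the recurrence, note that $z\varphi_k'(z)=\sum_{n\ge0}2n\,c_nz^{2n}$, so comparing the coefficient of $z^{2n}$ on the two sides of $(\mu-k+1)\varphi_{k+1}=(\mu-k+1)\varphi_k+z\varphi_k'$ and cancelling the common factor $\left(\frac{a+2}{2}\right)_n$ reduces the claim to $\left(\frac{a+3}{2}\right)_n=\frac{(a+1)+2n}{a+1}\left(\frac{a+1}{2}\right)_n$, which is immediate from $\frac{a+3}{2}=\frac{a+1}{2}+1$. For the Lommel identity I would simply substitute $\mu\mapsto\mu-k-\tfrac12$ and $\nu=\tfrac12$ into the defining series of $s_{\mu,\nu}$: the denominator factors $(\mu-\nu+1)(\mu+\nu+1)$ become $(\mu-k)(\mu-k+1)$, the $\,_1F_2$ parameters become $\frac{\mu-k+2}{2}$ and $\frac{\mu-k+3}{2}$ (matching $\varphi_k$), and the prefactor $z^{\mu-k+\frac12}$ multiplied by $\sqrt{z}$ yields $z^{\mu-k+1}$, which is exactly the stated relation. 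Both identities are mechanical once the series are written out, so the genuine content of the lemma sits in the order computation and, above all, in the reality of the zeros.
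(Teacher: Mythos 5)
The paper does not actually prove this lemma: it is quoted as ``the content of Lemmas 1 and 2 in \cite{Bar2}'' and used as a black box. That said, your argument is the natural one and coincides with the technique the paper itself applies to the companion factorization of $s_{\mu-\frac{1}{2},\frac{1}{2}}'$ in the proof of Theorem \ref{theo1}: read the growth order off the Taylor coefficients via $\rho=\limsup_m m\log m/(-\log|a_m|)$ using the limit \eqref{L1} and Levin's theorem, then invoke Hadamard; your passage to $\psi_k(w)=\varphi_k(\sqrt{w})$ of order $\tfrac12$ and genus $0$ is a clean way to see why no exponential factor survives. The two identities are indeed mechanical, and your reduction of the recurrence to $\left(\frac{a+3}{2}\right)_n=\frac{a+1+2n}{a+1}\left(\frac{a+1}{2}\right)_n$ and your specialization of the series of $s_{\mu,\nu}$ at $\nu=\tfrac12$ both check out.

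The one place where your plan goes astray is the treatment of the reality of the zeros, which you single out as ``the main obstacle.'' First, it is not needed for the factorization: once $\psi_k$ has order $\tfrac12$ and genus $0$, the canonical product over \emph{all} its zeros (real or not) converges absolutely, and pairing them as $\pm z_{\mu,k,n}$ yields the displayed product; the phrase ``$n$th positive zero'' is only literally meaningful in the parameter ranges where reality is separately known. Second, reality is not available in the generality of the lemma's hypotheses: the paper asserts it only for $\mu\in(0,1)$ (citing Koumandos--Lamprecht and \cite{Bar2}), and your proposed route through P\'olya's theorem relies on the positivity and monotonicity of kernels such as $(1-t)^{\mu-1}$, which the Section 4 computations require $\mu\in(0,1)$ to establish. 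Third, the Laguerre inequality cannot be used to ``place'' $\varphi_k$ in the Laguerre--P\'olya class --- it is a consequence of membership, not a criterion for it, and the paper uses it only in that direction. So you should excise the reality discussion from the proof of the lemma (or state it as a separate fact valid for $\mu-k\in(0,1)$); with that removed, your proof is complete and correct.
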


We also recall that by definition the real entire function $\phi,$
defined by
$$\phi(z)=\phi(z;t)=\sum_{n\geq0}a_n(t)\frac{z^n}{n!},$$ is said
is in the Laguerre-P\'olya class if
$\phi(z)$ can be expressed in the form
$$\phi(z)=cz^de^{-\alpha z^2+\beta z}\prod_{n=1}^{\omega}\left(1-\frac{z}{z_n}\right)e^{\frac{z}{z_n}},\ \ \ 0\leq\omega\leq\infty,$$
where $c$ and $\beta$ are real, $z_n$'s are real and nonzero for all
$n\in\{1,2,{\dots},\omega\},$ $\alpha\geq0,$ $d$ is a nonnegative
integer and $\sum_{n=1}^{\omega}z_i^{-2}<\infty.$ If $\omega=0,$
then, by convention, the product is defined to be $1.$

\section{\bf Radii of convexity of some Lommel functions of the first kind}
\setcounter{equation}{0}

The first main result we establish reads as follows.

\begin{theorem}
\label{theo1} Let $\mu\in(-1,1),$ $\mu\neq0$ and suppose that $0\leq\alpha<1.$ Moreover, let $\xi _{\mu ,1}$ and $\xi _{\mu ,1}^{\prime }$ denote the first
positive zeros of $s_{\mu -\frac{1}{2},\frac{1}{2}}$ and $s_{\mu -\frac{1}{2}%
,\frac{1}{2}}^{\prime }$, respectively. Then the following statements hold:
\begin{enumerate}
\item[\textbf{a)}] If $\mu \neq -\frac{1}{2},$ then
the radius of convexity of order $\alpha $ of the function $f_{\mu }$ is the
smallest positive root of the equation
\begin{equation*}
1+\frac{r\,s_{\mu -\frac{1}{2},\frac{1}{2}}^{\prime \prime }(r)}{s_{\mu -%
\frac{1}{2},\frac{1}{2}}^{\prime }(r)}+\left( \frac{1}{\mu +\frac{1}{2}}%
-1\right) \frac{r\,s_{\mu -\frac{1}{2},\frac{1}{2}}^{\prime }(r)}{s_{\mu -%
\frac{1}{2},\frac{1}{2}}(r)}=\alpha.
\end{equation*}

\item[\textbf{b)}] The radius of convexity of
order $\alpha $ of the function $g_{\mu }$ is the smallest positive root of
the equation
\begin{equation*}
-\mu +\frac{1}{2}+\frac{r\,s_{\mu -\frac{1}{2},\frac{1}{2}}^{\prime \prime
}(r)}{s_{\mu -\frac{1}{2},\frac{1}{2}}^{\prime }(r)}=\alpha.
\end{equation*}

\item[\textbf{c)}] The radius of convexity of
order $\alpha $ of the function $h_{\mu }$ is the smallest positive root of
the equation
\begin{equation*}
-\mu +\frac{1}{2}+\frac{r^{\frac{1}{2}}\,s_{\mu -\frac{1}{2},\frac{1}{2}%
}^{\prime \prime }(r^{\frac{1}{2}})}{s_{\mu -\frac{1}{2},\frac{1}{2}%
}^{\prime }(r^{\frac{1}{2}})}=2\alpha.
\end{equation*}
\end{enumerate}

Moreover, we have the inequalities $r_{\alpha }^{c}(f_{\mu })<\xi _{\mu ,1}',$ $r_{\alpha }^{c}(g_{\mu })<\xi _{\mu ,1}',$ $r_{\alpha }^{c}(h_{\mu })<\xi _{\mu ,1}^{\prime }$ and $\xi _{\mu ,1}^{\prime }<\xi _{\mu ,1}.$
\end{theorem}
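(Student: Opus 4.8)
The plan is to handle the three normalizations uniformly by turning the convexity condition into a real inequality. Writing $s=s_{\mu-\frac12,\frac12}$ and using the representation $s(z)=\frac{z^{\mu+1/2}}{\mu(\mu+1)}\varphi_0(z)$ from Lemma~\ref{lem1}, I would compute $1+zf''/f'$ for each function by logarithmic differentiation of $f'$. For $f_\mu=\bigl(\mu(\mu+1)s\bigr)^{1/(\mu+1/2)}$ this gives directly
\begin{equation*}
1+\frac{zf_\mu''(z)}{f_\mu'(z)}=1+\frac{zs''(z)}{s'(z)}+\left(\frac{1}{\mu+\frac12}-1\right)\frac{zs'(z)}{s(z)},
\end{equation*}
which is exactly the left-hand side appearing in a); the same elementary manipulation, with the extra substitution $z\mapsto\sqrt z$ for $h_\mu$ (this is what produces the factor $2\alpha$), yields the expressions in b) and c).

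To estimate the real part I would insert the Hadamard products. Lemma~\ref{lem1} gives $\varphi_0(z)=\prod_{n\ge1}\bigl(1-z^2/\xi_{\mu,n}^2\bigr)$, and since $s'(z)=\frac{z^{\mu-1/2}}{\mu(\mu+1)}\psi(z)$ with $\psi=(\mu+\frac12)\varphi_0+z\varphi_0'$, the positive zeros of $s'$ are the zeros of $\psi$. The first technical input is that on the range $\mu\in(-1,1)$ the function $\psi$ is real entire of order one with only real zeros $\pm\xi_{\mu,n}'$, so that $\psi(z)=(\mu+\frac12)\prod_{n\ge1}\bigl(1-z^2/(\xi_{\mu,n}')^{2}\bigr)$; this is the point where the Laguerre--P\'olya class and the reality results announced for Section 4 are used. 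Granting it, both $\frac{zs'}{s}=(\mu+\frac12)-\sum_{n}\frac{2z^2}{\xi_{\mu,n}^2-z^2}$ and $\frac{zs''}{s'}=(\mu-\frac12)-\sum_{n}\frac{2z^2}{(\xi_{\mu,n}')^{2}-z^2}$ are Mittag-Leffler expansions, so each of the quantities in a)--c) is a constant minus a positive combination of terms $\frac{z^2}{a^2-z^2}$.

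The analytic heart is then the elementary bound
\begin{equation*}
\Real\frac{z^2}{a^2-z^2}\le\frac{r^2}{a^2-r^2},\qquad |z|=r<a,
\end{equation*}
with equality at $z=r$, which shows that once every coefficient in the combination has the correct sign, $\Real(1+zf''/f')$ is minimized over $|z|=r$ on the positive real axis. The main obstacle is exactly this sign bookkeeping: the coefficient $\frac{1}{\mu+1/2}-1=\frac{1/2-\mu}{\mu+1/2}$ changes sign across the subintervals of $(-1,1)$, and a wrong sign reverses the inequality. This is why the interlacing $\xi_{\mu,1}'<\xi_{\mu,1}$ is needed first: it guarantees that $s$ does not vanish on $(0,\xi_{\mu,1}')$, so the $\frac{zs'}{s}$ term stays finite and controllable there. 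Once the minimum is known to sit at $z=r$, the radius of convexity of order $\alpha$ is the first positive $r$ at which the decreasing real function equals $\alpha$, and the equality case of the bound delivers the sharpness.

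Finally I would prove the four inequalities. For $\xi_{\mu,1}'<\xi_{\mu,1}$ I would apply Rolle's theorem to $s$: since $\varphi_0>0$ on $(0,\xi_{\mu,1})$, the function $s$ keeps a fixed sign there and vanishes at $\xi_{\mu,1}$, forcing a critical point inside (the singular case $\mu<-\frac12$, where $s$ blows up at the origin, is instead read off from the product for $\psi$). For the three radius estimates I would note that the convexity quantity $1+rf''(r)/f'(r)$ equals $1$ at the origin, hence exceeds $\alpha$ there, while the left-hand sides in a)--c) all diverge to $-\infty$ as $r\uparrow\xi_{\mu,1}'$, because $s'(\xi_{\mu,1}')=0$ makes the dominant term $\frac{rs''}{s'}$ (which enters with a positive coefficient) tend to $-\infty$, the remaining terms staying bounded there thanks to $\xi_{\mu,1}'<\xi_{\mu,1}$. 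By the intermediate value theorem the smallest root therefore lies strictly inside $(0,\xi_{\mu,1}')$, giving $r_\alpha^c(f_\mu),r_\alpha^c(g_\mu),r_\alpha^c(h_\mu)<\xi_{\mu,1}'$. I expect the reality of the zeros of $\psi$ for the whole parameter range and the sign analysis in the key inequality to be the genuinely delicate steps.
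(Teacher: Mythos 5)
Your overall architecture matches the paper's: logarithmic differentiation, Hadamard/Mittag--Leffler expansions in terms of the zeros $\xi_{\mu,n}$ and $\xi_{\mu,n}'$, and minimization of the real part of $1+zf''(z)/f'(z)$ over $|z|=r$ at the point $z=r$. But there is a genuine gap at exactly the step you flag as the main obstacle. When the coefficient $\frac{1}{\mu+\frac12}-1=\frac{\frac12-\mu}{\mu+\frac12}$ is negative (for instance $\mu\in\left(\frac12,1\right)$ in part a)), the term $-\left(\frac{1}{\mu+\frac12}-1\right)\sum_{n\ge1}\frac{2z^2}{\xi_{\mu,n}^2-z^2}$ enters the expansion with a \emph{positive} coefficient, and your elementary bound $\Real\frac{z^2}{a^2-z^2}\le\frac{r^2}{a^2-r^2}$ then goes the wrong way for that sum. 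Your proposed remedy --- that the interlacing keeps $zs'/s$ ``finite and controllable'' on $(0,\xi_{\mu,1}')$ --- does not produce the required inequality $\Real\left(1+zf_{\mu}''(z)/f_{\mu}'(z)\right)\ge 1+rf_{\mu}''(r)/f_{\mu}'(r)$; mere boundedness of the offending term cannot locate the minimum on the positive real axis. The paper closes this gap with the two-term inequality (Lemma 2.1 of \cite{BAS}, the paper's inequality (2.2)): for $a>b>0$, $\lambda\in[0,1]$ and $|z|<b$,
$$\lambda\,\Real\left(\frac{z}{a-z}\right)-\Real\left(\frac{z}{b-z}\right)\ \ge\ \lambda\,\frac{|z|}{a-|z|}-\frac{|z|}{b-|z|},$$
applied to each pair $\left(\xi_{\mu,n}^2,\xi_{\mu,n}'^2\right)$ with $\lambda=\frac{\mu-\frac12}{\mu+\frac12}$. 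This is the place where interlacing is genuinely indispensable: one needs $\xi_{\mu,n}'<\xi_{\mu,n}$ for \emph{every} $n$ (not just $n=1$) so that the pairing is admissible, and one needs $\lambda\in[0,1]$, which is part of what restricts the parameter range. Without this (or an equivalent device) your argument only covers $\mu\in\left(0,\frac12\right]$ in part a), and the analogous difficulty recurs in the shifted case $\mu\in(-1,0)$, which the paper handles by passing from $\varphi_0$ to $\varphi_1$.

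A secondary point: to identify the radius with the \emph{smallest} positive root you assert that $r\mapsto 1+rf_{\mu}''(r)/f_{\mu}'(r)$ is decreasing without proof; in the wrong-sign regime this again requires a paired estimate, namely $\xi_{\mu,n}^2\left(\xi_{\mu,n}'^2-r^2\right)^2<\xi_{\mu,n}'^2\left(\xi_{\mu,n}^2-r^2\right)^2$, which once more rests on the interlacing of the two zero sequences. Your Rolle-type derivation of $\xi_{\mu,1}'<\xi_{\mu,1}$, the limit argument as $r\uparrow\xi_{\mu,1}'$, and the deferral of the reality of the zeros of $s_{\mu-\frac12,\frac12}'$ to the Laguerre--P\'olya machinery are all consistent with what the paper does.
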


\begin{proof}[\bf Proof]

{\bf a)} By using Lemma \ref{lem1} we know that the Hadamard factorization of $%
s_{\mu -\frac{1}{2},\frac{1}{2}}(z)$ is
\begin{equation}
s_{\mu -\frac{1}{2},\frac{1}{2}}(z)=\frac{z^{\mu +\frac{1}{2}}}{\mu (\mu +1)}%
\varphi _{0}(z)=\frac{z^{\mu +\frac{1}{2}}}{\mu (\mu +1)}\prod%
\limits_{n\geq1}\left( 1-\frac{z^{2}}{\xi _{\mu ,n}^{2}}\right) ,
\label{s1}
\end{equation}%
where $\xi _{\mu ,n}$ denotes the $n$th positive zero of the function $\varphi _{0}.$
Moreover, it is known that if $\mu \in (0,1),$ then the zeros of the function $\varphi _{0}$
are real (see \cite[Lemma 2.1]{Kou} and \cite{Bar2}). We also note that the
infinite sum representation of the Lommel function $s_{\mu -\frac{1}{2},%
\frac{1}{2}}$ is
\begin{equation*}
s_{\mu -\frac{1}{2},\frac{1}{2}}(z)=\frac{z^{\mu +\frac{1}{2}}}{\mu (\mu +1)}%
\sum_{n\geq0}\frac{\left( -1\right) ^{n}}{\left( \frac{\mu +2}{2}%
\right) _{n}\left( \frac{\mu +3}{2}\right) _{n}}\left( \frac{z}{2}\right)
^{2n},  \label{s2}
\end{equation*}%
where $(\lambda )_{n}=\Gamma (\lambda +n)/\Gamma (\lambda )=\lambda (\lambda
+1)\dots(\lambda +n-1)$ is the well-known Pochhammer (or Appell) symbol. Thus, we obtain%
\begin{equation*}
s_{\mu -\frac{1}{2},\frac{1}{2}}^{\prime }(z)=\frac{\left( \mu +\frac{1}{2}%
\right) z^{\mu -\frac{1}{2}}}{\mu (\mu +1)}\sum_{n\geq0}\frac{\left(
-1\right) ^{n}\left( \mu +\frac{1}{2}+2n\right) }{\left( \mu +\frac{1}{2}%
\right) \left( \frac{\mu +2}{2}\right) _{n}\left( \frac{\mu +3}{2}\right)
_{n}}\left( \frac{z}{2}\right) ^{2n}
\end{equation*}%
and%
\begin{equation*}
\frac{\mu (\mu +1)}{\mu +\frac{1}{2}}z^{-\mu +\frac{1}{2}%
}s_{\mu -\frac{1}{2},\frac{1}{2}}^{\prime }(z)=1+\sum_{n\geq1}\frac{%
\left( -1\right) ^{n}\left( \mu +\frac{1}{2}+2n\right) z^{2n}}{\left( \mu +%
\frac{1}{2}\right) 2^{2n}\left( \frac{\mu +2}{2}\right) _{n}\left( \frac{\mu
+3}{2}\right) _{n}}.
\end{equation*}%
Taking into consideration the well-known limit
\begin{equation}
\lim_{n\longrightarrow \infty }\frac{\log \Gamma \left( n+c\right) }{n\log n}%
=1,  \label{L1}
\end{equation}%
where $c$ is a positive constant, and \cite[p. 6, Theorems 2]{LV} we
infer that the above entire function is of growth order $\rho =\frac{1%
}{2}$. Namely, for $\mu \in (0,1)$ we have that as $n\longrightarrow \infty $%
\begin{equation*}
\frac{n\log n}{\log \left( \mu +\frac{1}{2}\right) +2n\log 2+\log \left(
\frac{\mu +2}{2}\right) _{n}+\log \left( \frac{\mu +3}{2}\right) _{n}-\log
(\mu +\frac{1}{2}+2n)}\longrightarrow \frac{1}{2}.
\end{equation*}
Now, by applying Hadamard's Theorem \cite[p. 26]{LV} we can write the
infinite product representation of $s_{\mu -\frac{1}{2},\frac{1}{2}}^{\prime
}$ as follows
\begin{equation}
s_{\mu -\frac{1}{2},\frac{1}{2}}^{\prime }(z)=\frac{\left( \mu +\frac{1}{2}%
\right) z^{\mu -\frac{1}{2}}}{\mu (\mu +1)}\prod\limits_{n\geq1}\left( 1-\frac{z^{2}}{\xi _{\mu ,n}^{\prime 2}}\right),  \label{s5}
\end{equation}%
where $\xi _{\mu ,n}^{\prime }$ denotes the $n$th positive zero of the function $s_{\mu -%
\frac{1}{2},\frac{1}{2}}'.$

Note that (see \cite{Bar2}) the function $z\mapsto\varphi _{0}(z)=\mu (\mu +1)z^{-\mu -\frac{1}{2}}s_{\mu -%
\frac{1}{2},\frac{1}{2}}(z)$ belongs to the Laguerre-P\'{o}lya class of entire functions (since the exponential factors in the infinite product are canceled because of the symmetry of the zeros $\pm \xi_{\mu,n},$ $n\in\mathbb{N},$ with respect to the origin), and consequently it satisfies the
Laguerre inequality 
\begin{equation}
\left( \varphi _{0}^{(n)}(z)\right) ^{2}-\left( \varphi
_{0}^{(n-1)}(z)\right) \left( \varphi_{0}^{(n+1)}(z)\right) >0,  \label{s6}
\end{equation}%
where $\mu \in (0,1)$ and $z\in \mathbb{R}$. On the other hand, we have that%
\begin{equation}\label{phider}
\varphi _{0}^{\prime }(z)=\mu (\mu +1)z^{-\mu -\frac{3}{2}}\left[ zs_{\mu -%
\frac{1}{2},\frac{1}{2}}^{\prime }(z)-\left( \mu +\frac{1}{2}\right) s_{\mu -%
\frac{1}{2},\frac{1}{2}}(z)\right],
\end{equation}
\begin{equation*}
\varphi _{0}^{\prime \prime }(z)=\mu (\mu +1)z^{-\mu -\frac{5}{2}}\left[
z^{2}s_{\mu -\frac{1}{2},\frac{1}{2}}^{\prime \prime }(z)-2\left( \mu +\frac{%
1}{2}\right) zs_{\mu -\frac{1}{2},\frac{1}{2}}^{\prime }(z)+\left( \mu +%
\frac{1}{2}\right) \left( \mu +\frac{3}{2}\right) s_{\mu -\frac{1}{2},\frac{1%
}{2}}(z)\right] ,
\end{equation*}%
and thus the Laguerre inequality (\ref{s6}) for $n=1$ is equivalent to%
\begin{equation*}
\mu (\mu +1)z^{-2\mu -3}\left[ z^{2}\left( s_{\mu -\frac{1}{2},\frac{1}{2}%
}^{\prime }(z)\right) ^{2}-z^{2}s_{\mu -\frac{1}{2},\frac{1}{2}}(z)s_{\mu -%
\frac{1}{2},\frac{1}{2}}^{\prime \prime }(z)-\left( \mu +\frac{1}{2}\right)
\left( s_{\mu -\frac{1}{2},\frac{1}{2}}(z)\right) ^{2}\right] >0.
\end{equation*}%
This implies that%
$$
\left( s_{\mu -\frac{1}{2},\frac{1}{2}}^{\prime }(z)\right) ^{2}-s_{\mu -%
\frac{1}{2},\frac{1}{2}}(z)s_{\mu -\frac{1}{2},\frac{1}{2}}^{\prime \prime
}(z)>\frac{\left( \mu +\frac{1}{2}\right) \left( s_{\mu -\frac{1}{2},\frac{1%
}{2}}(z)\right) ^{2}}{z^{2}}>0
$$
for $\mu\in(0,1)$ and $z\in \mathbb{R}$, and thus $z\longmapsto
s_{\mu -\frac{1}{2},\frac{1}{2}}^{\prime }(z)/s_{\mu -\frac{1}{2},\frac{1}{2}%
}(z)$ is decreasing on $(0,\infty )\backslash \left\{ \xi _{\mu ,n}:n\in
\mathbb{N}\right\} .$ Since the zeros $\xi _{\mu ,n}$ of the Lommel function
$s_{\mu -\frac{1}{2},\frac{1}{2}}$ are real and simple, $s_{\mu -\frac{1}{2},%
\frac{1}{2}}^{\prime }(z)$ does not vanish in $\xi _{\mu ,n},$ $n\in \mathbb{%
N}$. Thus, for a fixed $k\in \mathbb{N}$ the function $z\longmapsto s_{\mu -%
\frac{1}{2},\frac{1}{2}}^{\prime }(z)/s_{\mu -\frac{1}{2},\frac{1}{2}}(z)$
takes the limit $\infty $ when $z\searrow \xi _{\mu ,k-1},$ and the limit $%
-\infty $ when $z\nearrow \xi _{\mu ,k}.$ Moreover, since $z\longmapsto
s_{\mu -\frac{1}{2},\frac{1}{2}}^{\prime }(z)/s_{\mu -\frac{1}{2},\frac{1}{2}%
}(z)$ is decreasing on $(0,\infty )\backslash \left\{ \xi _{\mu ,n}:n\in
\mathbb{N}\right\} $ it results that in each interval $(\xi _{\mu ,k-1},\xi
_{\mu ,k})$ its restriction intersects the horizontal line only once, and
the abscissa of this intersection point is exactly $\xi _{\mu,k}^{\prime }$.
Consequently, the zeros $\xi _{\mu ,n}$ and $\xi _{\mu ,n}^{\prime }$ interlace. Here
we used the convention that $\xi _{\mu ,0}=0$.

On the other hand, it is known that \cite{BAS} if ${z\in \mathbb{C}}$ and $%
\beta $ ${\in \mathbb{R}}$ are such that $\beta >{\left\vert z\right\vert }$%
, then%
\begin{equation}
\frac{{\left\vert z\right\vert }}{\beta -{\left\vert z\right\vert }}\geq \Real
\left( \frac{z}{\beta -z}\right) .  \label{s7}
\end{equation}
Observe also that
\begin{equation*}
1+\frac{zf_{\mu }^{\prime \prime }(z)}{f_{\mu }^{\prime }(z)}=1+\frac{%
zs_{\mu -\frac{1}{2},\frac{1}{2}}^{\prime \prime }(z)}{s_{\mu -\frac{1}{2},%
\frac{1}{2}}^{\prime }(z)}+\left( \frac{1}{\mu +\frac{1}{2}}-1\right) \frac{%
zs_{\mu -\frac{1}{2},\frac{1}{2}}^{\prime }(z)}{s_{\mu -\frac{1}{2},\frac{1%
}{2}}(z)}\text{\ .\ }
\end{equation*}%
By means of (\ref{s1}) and (\ref{s5}) we have%
\begin{equation}
\frac{zs_{\mu -\frac{1}{2},\frac{1}{2}}^{\prime }(z)}{s_{\mu -\frac{1}{2},%
\frac{1}{2}}(z)}=\mu +\frac{1}{2}-\sum_{n\geq 1}\frac{2z^{2}}{\xi
_{\mu ,n}^{2}-z^{2}},\text{ \ \ \ }1+\frac{zs_{\mu -\frac{1}{2},\frac{1}{2}%
}^{\prime \prime }(z)}{s_{\mu -\frac{1}{2},\frac{1}{2}}^{\prime }(z)}=\mu +%
\frac{1}{2}-\sum_{n\geq 1}\frac{2z^{2}}{\xi _{\mu ,n}^{\prime 2}-z^{2}}%
,  \label{2.1}
\end{equation}%
and it follows that%
\begin{equation*}
1+\frac{zf_{\mu }^{\prime \prime }(z)}{f_{\mu }^{\prime }(z)}=1-\left(
\frac{1}{\mu +\frac{1}{2}}-1\right) \sum_{n\geq1}\frac{2z^{2}}{\xi
_{\mu ,n}^{2}-z^{2}}-\sum_{n\geq1}\frac{2z^{2}}{\xi _{\mu ,n}^{\prime
2}-z^{2}}.
\end{equation*}%
Now, suppose that $\mu \in (0,\frac{1}{2}].$ By using (\ref{s7}), we obtain for all $z\in \mathbb{D}_{\xi _{\mu ,1}^{\prime }}$ the
inequality%
\begin{equation*}
\Real\left( 1+\frac{zf_{\mu}''(z)}{f_{\mu}'(z)}\right) \geqslant 1-\left( \frac{1}{\mu +\frac{1}{2}}-1\right)
\sum_{n\geq1}\frac{2r^{2}}{\xi _{\mu ,n}^{2}-r^{2}}%
-\sum_{n\geq1}\frac{2r^{2}}{\xi _{\mu ,n}^{\prime 2}-r^{2}},
\label{s8}
\end{equation*}%
where ${\left\vert z\right\vert =r.}$ Moreover, observe that if we use the
inequality \cite[Lemma 2.1]{BAS}%
\begin{equation}
\lambda \Real\left( \frac{z}{a-z}\right) -\Real\left( \frac{z}{b-z}%
\right) \geqslant \lambda \frac{{\left\vert z\right\vert }}{a-{\left\vert
z\right\vert }}-\frac{{\left\vert z\right\vert }}{b-{\left\vert z\right\vert
}},  \label{2.2}
\end{equation}%
where $a>b>0,$ $\lambda \in \left[ 0,1\right] $ and $z\in \mathbb{C}$ such
that ${\left\vert z\right\vert <b}$, then we get that the above inequality
is also valid when $\mu \in\left(\frac{1}{2},1\right)$. Here we used that the zeros $\xi
_{\mu ,n}$ and $\xi _{\mu ,n}^{\prime }$ interlace. Thus, for ${r}\in \left( 0,\xi _{\mu ,1}^{\prime }\right) $ we have
\begin{equation*}
\inf_{z\in \mathbb{D}_{r}}\left\{ \Real\left( 1+\frac{zf_{\mu}''(z)}{f_{\mu}'(z)}\right) \right\} =1+\frac{%
rf_{\mu}''(r)}{f_{\mu}'(r)}.
\end{equation*}%
On the other hand, the function $F_{\mu }:\left( 0,\xi _{\mu ,1}^{\prime
}\right) \longrightarrow \mathbb{R}$, defined by%
\begin{equation*}
F_{\mu }(r)=1+\frac{rf_{\mu}''(r)}{f_{\mu}'(r)},
\end{equation*}%
is strictly decreasing for all $\mu\in(0,1).$ Namely, we have
\begin{eqnarray*}
F_{\mu }^{\prime }(r) &=&-\left( \frac{1}{\mu +\frac{1}{2}}-1\right)
\sum_{n\geq1}\frac{4r\xi _{\mu ,n}^{2}}{\left( \xi _{\mu
,n}^{2}-r^{2}\right) ^{2}}-\sum_{n\geq1}\frac{4r\xi _{\mu ,n}^{\prime
2}}{\left( \xi _{\mu ,n}^{\prime 2}-r^{2}\right) ^{2}} \\
&<&\sum_{n\geq1}\frac{4r\xi _{\mu ,n}^{2}}{\left( \xi _{\mu
,n}^{2}-r^{2}\right) ^{2}}-\sum_{n\geq1}\frac{4r\xi _{\mu ,n}^{\prime
2}}{\left( \xi _{\mu ,n}^{\prime 2}-r^{2}\right) ^{2}}<0
\end{eqnarray*}%
for $\mu\in\left(\frac{1}{2},1\right)$ and ${r}\in \left( 0,\xi _{\mu ,1}^{\prime }\right).$ Here we
used again that the zeros $\xi _{\mu ,n}$ and $\xi _{\mu ,n}^{\prime }$
interlace, and for all $n\in \mathbb{N}$, $\mu\in(0,1)$ and $r<\sqrt{\xi _{\mu ,n}\xi
_{\mu ,n}^{\prime }}$ we have that
\begin{equation*}
\xi _{\mu ,n}^{2}\left( \xi _{\mu,n}^{\prime 2}-r^{2}\right) ^{2}<\xi _{\mu
,n}^{\prime 2}\left( \xi _{\mu,n}^{2}-r^{2}\right) ^{2}.
\end{equation*}
Observe that when $\mu\in\left(0,\frac{1}{2}\right]$ and $r>0$ we have also that $F_{\mu}'(r)<0,$ and thus $F_{\mu}$ is indeed decreasing for all $\mu\in(0,1).$ Now, since $\lim_{r\searrow 0}F_{\mu }(r)=1>\alpha $ and $\lim_{r\nearrow \xi
_{\mu ,1}^{\prime }}F_{\mu }(r)=-\infty ,$ in view of the minimum principle for harmonic functions it follows that for $\mu\in(0,1)$ and $z\in \mathbb{%
D}_{r_{1}}$ we have%
\begin{equation}
\Real\left( 1+\frac{zf_{\mu}''(z)}{f_{\mu}'(z)}\right) >\alpha
\label{f1}
\end{equation}%
if and only if $r_{1}$ is the unique root of
$$1+\frac{rf_{\mu}''(r)}{f_{\mu}'(r)}=\alpha$$
situated in $\left( 0,\xi _{\mu ,1}'\right).$ This completes the
proof of part \textbf{a)} of our theorem when $\mu \in (0,1)$.

Now we prove that \eqref{f1} also holds when $\mu \in \left(
-1,0\right) .$

In order to do this, suppose that $\mu \in (0,1)$ and repeat the above proof,
substituting $\mu $ by $\mu -1$, $\varphi _{0}$ by the function $\varphi _{1}
$ and taking into account that the $n$th positive zero of $\varphi _{1}$ and
$\varphi _{1}^{\prime },$ denoted by $\zeta _{\mu ,n}$ and $\zeta _{\mu
,n}^{\prime },$ interlace, since $\varphi_1$ belongs also to the Laguerre-P\'olya
class of entire functions (see \cite{Bar2}). It is worth mentioning that%
\begin{equation*}
\Real \left( 1+\frac{zf_{\mu -1}^{\prime \prime }(z)}{f_{\mu -1}^{\prime }(z)%
}\right) \geq 1-\left( \frac{1}{\mu -\frac{1}{2}}-1\right)
\sum_{n\geq 1}\frac{2r^{2}}{\zeta _{\mu ,n}^{2}-r^{2}}%
-\sum_{n\geq1}\frac{2r^{2}}{\zeta _{\mu ,n}^{\prime 2}-r^{2}}
\end{equation*}%
holds for $\mu \in \left(0,1\right) $ with $\mu \neq \frac{1}{2}$.
In this case we use again the minimum principle for harmonic functions to ensure
that \eqref{f1} is valid for $\mu -1$ instead of $\mu .$ Thus, replacing $\mu $ by $\mu +1$, we obtain the statement of the part \textbf{a)%
} for $\mu \in \left( -1,0\right) $ with $\mu \neq -\frac{1}{2}.$

\textbf{b) }Observe that
$$
1+\frac{zg_{\mu }^{\prime \prime }(z)}{g_{\mu }^{\prime }(z)}=-\mu +\frac{1%
}{2}+\frac{zs_{\mu -\frac{1}{2},\frac{1}{2}}^{\prime \prime }(z)}{s_{\mu -%
\frac{1}{2},\frac{1}{2}}^{\prime }(z)}\text{\ .\ }
$$
By means of (\ref{2.1}) we have%
$$
1+\frac{zg_{\mu }^{\prime \prime }(z)}{g_{\mu }^{\prime }(z)}%
=-\sum_{n\geq1}\frac{2z^{2}}{\xi _{\mu ,n}^{\prime 2}-z^{2}}.
$$
Now, suppose that $\mu \in \left( 0,1\right) .$ By using the inequality (\ref%
{s7}), for all $z\in \mathbb{D}_{\xi _{\mu ,1}^{\prime }}$ we obtain the
inequality%
$$
\Real\left( 1+\frac{zg_{\mu}''(z)}{g_{\mu}'(z)}\right) \geqslant -\sum_{n\geq 1}\frac{2r^{2}}{\xi _{\mu
,n}^{\prime 2}-r^{2}},
$$
where ${\left\vert z\right\vert =r.}$ Thus, for ${r}\in \left( 0,\xi _{\mu
,1}^{\prime }\right) $ we get%
\begin{equation*}
\inf_{z\in \mathbb{D}_{r}}\left\{ \Real\left( 1+\frac{zg_{\mu}''(z)}{g_{\mu}'(z)}\right) \right\} =1+\frac{rg_{\mu}''(r)}{g_{\mu}'(r)}.
\end{equation*}%
The function $G_{\mu }:\left( 0,\xi _{\mu ,1}'\right)
\longrightarrow \mathbb{R}$, defined by%
\begin{equation*}
G_{\mu }(r)=1+\frac{rg_{\mu }^{\prime \prime }(r)}{g_{\mu }^{\prime }(r)},
\end{equation*}%
is strictly decreasing and $\lim_{r\searrow 0}G_{\mu }(r)=1>\alpha$, $%
\lim_{r\nearrow \xi _{\mu ,1}^{\prime }}G_{\mu }(r)=-\infty $. Consequently, in view of the minimum principle for harmonic functions
for $z\in \mathbb{D}_{r_{2}}$ we have that
\begin{equation*}
\Real\left( 1+\frac{zg_{\mu}''(z)}{g_{\mu}'(z)}\right) >\alpha
\end{equation*}%
if and only if $r_{2}$ is the unique root of
\begin{equation*}
1+\frac{rg_{\mu}''(r)}{g_{\mu}'(r)}=\alpha
\end{equation*}%
situated in $\left( 0,\xi _{\mu ,1}'\right) .$ For $\mu \in (-1,0)$
the proof goes along the lines introduced at the end of the proof of part \textbf{a)}, and thus we omit the details.

\textbf{c)} Observe that
$$
1+\frac{zh_{\mu }^{\prime \prime }(z)}{h_{\mu }^{\prime }(z)}=\frac{1-2\mu
}{4}+\frac{\sqrt{z}\,s_{\mu -\frac{1}{2},\frac{1}{2}}^{\prime \prime }(\sqrt{%
z})}{2s_{\mu -\frac{1}{2},\frac{1}{2}}^{\prime }(\sqrt{z})}.
$$
By means of (\ref{2.1}) we have%
$$
1+\frac{z\,h_{\mu }^{\prime \prime }(z)}{h_{\mu }^{\prime }(z)}%
=-\sum_{n\geq1}\frac{2z}{\xi _{\mu ,n}^{\prime 2}-z}.$$
Now, suppose that $\mu \in \left( 0,1\right) .$ By using (\ref%
{s7}), for all $z\in \mathbb{D}_{\xi _{\mu ,1}^{\prime }}$ we obtain the
inequality%
\begin{equation*}
\Real\left( 1+\frac{z\,h_{\mu }''(z)}{h_{\mu}'(z)}\right) \geqslant -\sum_{n\geq1}\frac{2r}{\xi _{\mu ,n}^{\prime
2}-r},  \label{h3}
\end{equation*}%
where ${\left\vert z\right\vert =r.}$ Thus, for ${r}\in \left( 0,\xi _{\mu
,1}^{\prime }\right) $ we get%
\begin{equation*}
\inf_{z\in \mathbb{D}_{r}}\left\{ \Real\left( 1+\frac{zh_{\mu}''(z)}{h_{\mu}'(z)}\right) \right\} =1+\frac{r\,h_{\mu
}^{\prime \prime }(r)}{h_{\mu }^{\prime }(r)}.
\end{equation*}%
The function $H_{\mu }:\left( 0,\xi _{\mu,1}'\right)
\longrightarrow \mathbb{R}$, defined by%
\begin{equation*}
H_{\mu }(r)=1+\frac{r\,h_{\mu}''(r)}{h_{\mu}'(r)},
\end{equation*}%
is strictly decreasing and $\lim_{r\searrow 0}H_{\mu }(r)=1>\alpha$, $%
\lim_{r\nearrow \xi _{\mu ,1}^{\prime }}H_{\mu }(r)=-\infty $. Consequently, in view of the minimum principle for harmonic functions
for $z\in \mathbb{D}_{r_{3}}$ we have that
\begin{equation*}
\Real\left( 1+\frac{z\,h_{\mu}''(z)}{h_{\mu}'(z)}\right) >\alpha
\end{equation*}%
if and only if $r_{3}$ is the unique root of
\begin{equation*}
1+\frac{r\,h_{\mu}''(r)}{h_{\mu}'(r)}=\alpha
\end{equation*}%
situated in $\left( 0,\xi _{\mu,1}'\right) .$ For $\mu \in (-1,0)$
the proof is similar, and we omit the details.
\end{proof}

\section{\bf Radii of convexity of Struve functions of the first kind}
\setcounter{equation}{0}

In this section our aim is to present the corresponding result about the radii of convexity of the functions,
related to Struve's one.

\begin{theorem}\label{theo2}
Let $|\nu|\leq\frac{1}{2}$ and $0\leq \alpha <1.$ Moreover, let $h_{\nu,1}$
and $h_{\nu,1}^{\prime }$ denote the first positive zeros of $\mathbf{H}_{\nu}$
and $\mathbf{H}_{\nu}^{\prime }$, respectively. Then, the following assertions are true:

\begin{enumerate}
\item[\textbf{a)}] The radius of convexity of
order $\alpha $ of the function $u_{\nu}$ is the smallest positive root of the
equation
\begin{equation*}
1+\frac{r\,\mathbf{H}_{\nu}^{\prime \prime }(r)}{\mathbf{H}_{\nu}^{\prime }(r)}%
+\left( \frac{1}{\nu+1}-1\right) \frac{r\,\mathbf{H}_{\nu}^{\prime }(r)}{\mathbf{%
H}_{\nu}(r)}=\alpha .
\end{equation*}%

\item[\textbf{b)}] The radius of convexity of
order $\alpha $ of the function $v_{\nu}$ is the smallest positive root of the
equation
\begin{equation*}
-\nu+\frac{r\,\mathbf{H}_{\nu}^{\prime \prime }(r)}{\mathbf{H}_{\nu}^{\prime }(r)}%
=\alpha .
\end{equation*}

\item[\textbf{c)}] The radius of convexity of
order $\alpha $ of the function $w_{\nu}$ is the smallest positive root of the
equation
\begin{equation*}
-\nu+\frac{r^{\frac{1}{2}}\,\mathbf{H}_{\nu}^{\prime \prime }(r^{\frac{1}{2}})}{%
\mathbf{H}_{\nu}^{\prime}(r^{\frac{1}{2}})}=2\alpha .
\end{equation*}%
\end{enumerate}
Moreover, we have the inequalities $r_{\alpha }^{c}(u_{\nu})<h_{\nu,1}^{\prime},$ $r_{\alpha }^{c}(v_{\nu})<h_{\nu,1}^{\prime },$ $r_{\alpha }^{c}(w_{\nu})<h_{\nu,1}^{\prime }$ and $h_{\nu,1}^{\prime }<h_{\nu,1}.$
\end{theorem}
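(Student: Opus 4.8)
The plan is to run the argument in exact parallel with the proof of Theorem \ref{theo1}, under the dictionary $s_{\mu-\frac12,\frac12}\leftrightarrow\mathbf H_\nu$, $\mu+\frac12\leftrightarrow\nu+1$ and $\xi_{\mu,n},\xi_{\mu,n}'\leftrightarrow h_{\nu,n},h_{\nu,n}'$. First I would record the factorizations on which everything rests. Writing
\[
\psi_\nu(z)={}_1F_2\!\left(1;\tfrac32,\nu+\tfrac32;-\tfrac{z^2}{4}\right),
\]
the series definition of $\mathbf H_\nu$ gives $\mathbf H_\nu(z)=\frac{z^{\nu+1}}{2^\nu\sqrt\pi\,\Gamma(\nu+\frac32)}\psi_\nu(z)$, so that $u_\nu(z)=z\,\psi_\nu(z)^{1/(\nu+1)}$, $v_\nu(z)=z\,\psi_\nu(z)$ and $w_\nu(z)=z\,\psi_\nu(\sqrt z)$, which reconfirms that the three functions lie in $\mathcal A$. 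An order computation on the coefficients of $\psi_\nu$, identical to the one carried out for $\varphi_0$ by means of (\ref{L1}) and \cite{LV}, shows that $\psi_\nu$ and the even entire function $z\mapsto(\nu+1)\psi_\nu(z)+z\psi_\nu'(z)$ — which is $\mathbf H_\nu'$ up to the factor $z^\nu$ and a constant — have growth order one, whence
\[
\psi_\nu(z)=\prod_{n\geq1}\left(1-\frac{z^2}{h_{\nu,n}^2}\right),\qquad
\mathbf H_\nu'(z)=\frac{(\nu+1)z^\nu}{2^\nu\sqrt\pi\,\Gamma(\nu+\frac32)}\prod_{n\geq1}\left(1-\frac{z^2}{h_{\nu,n}^{\prime 2}}\right).
\]

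The arithmetic heart of the argument, and the only place where the hypothesis $|\nu|\leq\frac12$ is genuinely used, is the interlacing of the zeros $h_{\nu,n}$ and $h_{\nu,n}'$. For $|\nu|\leq\frac12$ the even function $\psi_\nu$ belongs to the Laguerre--P\'olya class (see \cite{BPS}), so its positive zeros $h_{\nu,n}$ are real and simple and $\psi_\nu$ obeys the Laguerre inequality, the analogue of (\ref{s6}). Transporting this inequality through $\mathbf H_\nu=c\,z^{\nu+1}\psi_\nu$ exactly as in part \textbf{a)} of Theorem \ref{theo1} will yield
\[
\left(\mathbf H_\nu'(z)\right)^2-\mathbf H_\nu(z)\,\mathbf H_\nu''(z)>\frac{(\nu+1)\left(\mathbf H_\nu(z)\right)^2}{z^2}>0,\qquad z\in\mathbb R,
\]
the positivity of the right-hand side using $\nu+1>0$. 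Consequently $z\mapsto\mathbf H_\nu'(z)/\mathbf H_\nu(z)$ is strictly decreasing on each interval free of zeros, so that exactly one zero of $\mathbf H_\nu'$ falls in each $(h_{\nu,k-1},h_{\nu,k})$ with the convention $h_{\nu,0}=0$; this yields the reality of the $h_{\nu,n}'$, the interlacing $h_{\nu,n}'<h_{\nu,n}$, and in particular the last claimed inequality $h_{\nu,1}'<h_{\nu,1}$. Note that, in contrast to the Lommel case, the single interval $|\nu|\leq\frac12$ is handled uniformly and no auxiliary shift of the parameter (the passage through $\varphi_1$) is required.

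With these factorizations I would pass, by logarithmic differentiation, to the expansions
\[
\frac{z\mathbf H_\nu'(z)}{\mathbf H_\nu(z)}=(\nu+1)-\sum_{n\geq1}\frac{2z^2}{h_{\nu,n}^2-z^2},\qquad
1+\frac{z\mathbf H_\nu''(z)}{\mathbf H_\nu'(z)}=(\nu+1)-\sum_{n\geq1}\frac{2z^2}{h_{\nu,n}^{\prime 2}-z^2},
\]
the analogues of (\ref{2.1}), and substitute them into the identities for $1+zu_\nu''/u_\nu'$, $1+zv_\nu''/v_\nu'$ and $1+zw_\nu''/w_\nu'$ that underlie the three displayed equations. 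For $v_\nu$ and $w_\nu$ only the logarithmic derivative $z\mathbf H_\nu''/\mathbf H_\nu'$ enters, and one is left in each case with a single sum all of whose terms are subtracted, so that (\ref{s7}) applies termwise with no restriction on the sign of $\nu$. For $u_\nu$ the coefficient $\frac{1}{\nu+1}-1=\frac{-\nu}{\nu+1}$ is nonnegative exactly when $\nu\in[-\frac12,0]$, where (\ref{s7}) again applies termwise; for $\nu\in(0,\frac12]$ it is negative, and there I would instead invoke the sharper inequality (\ref{2.2}) with $a=h_{\nu,n}^2>b=h_{\nu,n}^{\prime 2}$ (using the interlacing) and $\lambda=\frac{\nu}{\nu+1}\in[0,1)$, exactly mirroring the passage from $\mu\in(0,\frac12]$ to $\mu\in(\frac12,1)$ for $f_\mu$.

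Finally, as in Theorem \ref{theo1}, I would check by termwise differentiation — once more using the interlacing $h_{\nu,n}'<h_{\nu,n}$ — that each of the real functions $r\mapsto1+ru_\nu''(r)/u_\nu'(r)$, $r\mapsto1+rv_\nu''(r)/v_\nu'(r)$ and $r\mapsto1+rw_\nu''(r)/w_\nu'(r)$ is strictly decreasing on $(0,h_{\nu,1}')$, tends to $1>\alpha$ as $r\searrow0$ and to $-\infty$ as $r\nearrow h_{\nu,1}'$. The minimum principle for harmonic functions then identifies $\Real\bigl(1+z(\cdot)''/(\cdot)'\bigr)>\alpha$ on $\mathbb D_r$ with $r$ being the unique root in $(0,h_{\nu,1}')$ of the corresponding equation, which delivers the three displayed equations together with the bounds $r_\alpha^c(u_\nu),r_\alpha^c(v_\nu),r_\alpha^c(w_\nu)<h_{\nu,1}'$. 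I expect the genuinely delicate point to be the transfer of the Laguerre inequality and the resulting interlacing under the single hypothesis $|\nu|\leq\frac12$; granting that, everything downstream is a faithful repetition of the Lommel argument, the only case distinction being the sign of $\nu$ in part \textbf{a)}.
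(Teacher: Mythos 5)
Your proposal is correct and follows essentially the same route as the paper's own proof: Hadamard factorizations of $\mathbf{H}_{\nu}$ and $\mathbf{H}_{\nu}'$, the Laguerre inequality for $\mathcal{H}_{\nu}$ (your $\psi_{\nu}$) giving the interlacing of $h_{\nu,n}$ and $h_{\nu,n}'$, then inequality (\ref{s7}) for $\nu\in[-\frac12,0]$ and (\ref{2.2}) with $\lambda=\frac{\nu}{\nu+1}$ for $\nu\in(0,\frac12]$, followed by the monotonicity and minimum-principle argument. The only cosmetic divergence is your bookkeeping of the growth order (order in $z$ versus in $z^{2}$), which does not affect the argument.
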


\begin{proof}[\bf Proof]
\textbf{a)} As in the proof of Theorem \ref{theo1} we need to the Hadamard
factorizations of $\mathbf{H}_{\nu }$ and $\mathbf{H}_{\nu }^{\prime }.$ If $\left\vert \nu \right\vert \leq \frac{1}{2},$ then (see \cite[Lemma 1]%
{BPS}) the Hadamard factorization of the transcendental entire function $%
\mathcal{H}_{\nu }$, defined by
\begin{equation*}
\mathcal{H}_{\nu }(z)=\sqrt{\pi }2^{\nu }z^{-\nu -1}\Gamma \left( \nu +\frac{%
3}{2}\right) \mathbf{H}_{\nu }(z),
\end{equation*}%
reads as follows%
\begin{equation*}
\mathcal{H}_{\nu }(z)=\prod\limits_{n\geq1}\left( 1-\frac{z^{2}}{%
h_{\nu ,n}^{2}}\right) ,
\end{equation*}%
which implies that
\begin{equation}\label{H1}
\mathbf{H}_{\nu }(z)=\frac{z^{\nu +1}}{\sqrt{\pi }2^{\nu }\Gamma \left( \nu +%
\frac{3}{2}\right) }\prod\limits_{n\geq1}\left( 1-\frac{z^{2}}{h_{\nu
,n}^{2}}\right),
\end{equation}
where $h_{\nu ,n}$ stands for the $n$th positive zero of the Struve function
$\mathbf{H}_{\nu }.$ The infinite sum representation of the function $\mathbf{H}_{\nu}'$ is
$$
\mathbf{H}_{\nu}'(z)=\frac{(\nu +1)z^{\nu }}{\sqrt{\pi }2^{\nu
}\Gamma \left( \nu +\frac{3}{2}\right) }\sum_{n\geq0}\frac{\left(
-1\right) ^{n}(2n+\nu+1)}{(\nu +1)2^{2n}\left(\frac{3}{2}\right) _{n}\left( \nu
+\frac{3}{2}\right) _{n}}z^{2n},$$ which can be rewritten as
\begin{equation*}
\frac{\sqrt{\pi }2^{\nu }\Gamma \left( \nu +\frac{3}{2}\right)
}{(\nu +1)}z^{-\nu }\mathbf{H}_{\nu }^{\prime }(z)=1+\sum_{n\geq 1}
\frac{\left( -1\right) ^{n}(2n+\nu+1)}{(\nu +1)2^{2n}\left(\frac{3}{2}\right)
_{n}\left( \nu +\frac{3}{2}\right) _{n}}z^{2n}.
\end{equation*}
Taking into consideration the limit in (\ref{L1}) we
deduce that the above entire function is of growth order $\rho =\frac{1%
}{2}$. Namely, for $\left\vert \nu \right\vert \leq \frac{1}{2}$ we have
that as $n\longrightarrow \infty $%
\begin{equation*}
\frac{n\log n}{\log \left(\nu+1\right) +2n\log 2+\log \left(\frac{3}{2}\right)
_{n}+\log \left( \nu+\frac{3}{2}\right)_{n}-\log(2n+\nu+1)}\longrightarrow \frac{1}{2}.
\end{equation*}
Now, by applying Hadamard's Theorem \cite[p. 26]{LV} we can write the
infinite product representation of $\mathbf{H}_{\nu }^{\prime }$ as follows
\begin{equation}
\mathbf{H}_{\nu }^{\prime }(z)=\frac{(\nu +1)z^{\nu}}{\sqrt{\pi }2^{\nu
}\Gamma \left( \nu +\frac{3}{2}\right) }\prod\limits_{n\geq1}\left( 1-%
\frac{z^{2}}{h_{\nu,n}^{\prime 2}}\right)  \label{H3}
\end{equation}%
where $h_{\nu ,n}^{\prime }$ denotes the $n$th positive zero of the function $\mathbf{H}%
_{\nu }^{\prime }.$

It is worth to mention that the zeros of $\mathbf{H}_{\nu}$ are all real and simple when $|\nu|\leq \frac{1}{2},$ see \cite{steinig}. Thus, the function $x\longmapsto \mathcal{H}_{\nu }(x)=\sqrt{\pi }2^{\nu }x^{-\nu
-1}\Gamma \left( \nu +\frac{3}{2}\right) \mathbf{H}_{\nu }(x)$ belongs to
the Laguerre-P\'{o}lya class of real entire functions (since the exponential factors in the infinite product are canceled because of the symmetry of the zeros $\pm h_{\nu,n},$ $n\in\mathbb{N},$ with respect to the origin) and thus it satisfies
the Laguerre inequality (see \cite{skov})
\begin{equation}
\left( \mathcal{H}_{\nu}^{(n)}(x)\right) ^{2}-\left( \mathcal{H}%
_{\nu}^{(n-1)}(x)\right) \left( \mathcal{H}_{\nu}^{(n+1)}(x)\right) >0,
\label{H4}
\end{equation}%
where $\left\vert \nu \right\vert \leq \frac{1}{2}$ and $x\in \mathbb{R}$.
On the other hand, we have that%
\begin{equation*}
\mathcal{H}_{\nu}^{\prime }(z)=\sqrt{\pi }2^{\nu }z^{-\nu -2}\Gamma \left( \nu
+\frac{3}{2}\right) \left[ z\mathbf{H}_{\nu }^{\prime }(z)-(\nu+1)\mathbf{H}%
_{\nu }(z)\right] ,
\end{equation*}%
\begin{equation*}
\mathcal{H}_{\nu}^{\prime \prime }(z)=\sqrt{\pi }2^{\nu }z^{-\nu -3}\Gamma
\left( \nu +\frac{3}{2}\right) \left[ z^{2}\mathbf{H}_{\nu }^{\prime \prime
}(z)-2(\nu+1)z\mathbf{H}_{\nu }^{\prime }(z)+(\nu+1)(\nu+2)\mathbf{H}_{\nu }(z)%
\right] ,
\end{equation*}%
and thus the Laguerre inequality (\ref{H4}) for $n=1$ is equivalent to%
\begin{equation*}
\pi 2^{2\nu }x^{-2\nu -4}\left[ \Gamma \left( \nu +\frac{3}{2}\right) \right]
^{2}\left[ x^{2}\left( \mathbf{H}_{\nu }^{\prime }(x)\right) ^{2}-x^{2}%
\mathbf{H}_{\nu }(x)\mathbf{H}_{\nu }^{\prime \prime }(x)-\left(\nu+1\right)
\left( \mathbf{H}_{\nu }(x)\right)^{2}\right] >0.
\end{equation*}%
This implies that%
\begin{equation}
\left( \mathbf{H}_{\nu }^{\prime }(x)\right) ^{2}-\mathbf{H}_{\nu }(x)%
\mathbf{H}_{\nu }^{\prime \prime }(x)>\frac{\left(\nu+1\right) \left( \mathbf{%
H}_{\nu }(x)\right) ^{2}}{x^{2}}>0  \label{H5}
\end{equation}
for $\left\vert \nu \right\vert \leq \frac{1}{2}$ and $x\in \mathbb{R}$,
that is, the function $x\longmapsto \mathbf{H}_{\nu }^{\prime }(x)/\mathbf{H}%
_{\nu }(x)$ is decreasing on $(0,\infty )\backslash \left\{ h_{\nu,n}:n\in
\mathbb{N}\right\} .$ Since the zeros $h_{\nu,n}$ of the Struve function $%
\mathbf{H}_{\nu }$ are real and simple, $\mathbf{H}_{\nu }^{\prime }(x)$
does not vanish in $h_{\nu,n},$ $n\in \mathbb{N}$. Thus, for a fixed $k\in
\mathbb{N}$ the function $x\longmapsto \mathbf{H}_{\nu}^{\prime }(x)/%
\mathbf{H}_{\nu }(x)$ takes the limit $\infty $ when $x\searrow h_{\nu,k-1},$
and the limit $-\infty $ when $x\nearrow h_{\nu,k}.$ Moreover, since $%
x\longmapsto \mathbf{H}_{\nu}^{\prime}(x)/\mathbf{H}_{\nu}(x)$ is
decreasing on $(0,\infty )\backslash \left\{h_{\nu,n}:n\in \mathbb{N}\right\}
$ it results that in each interval $(h_{\nu,k-1},h_{\nu,k})$ its restriction
intersects the horizontal line only once, and the abscissa of this
intersection point is exactly $h_{\nu,k}^{\prime}$. So the zeros $h_{\nu,n}$
and $h_{\nu,n}^{\prime }$ interlace. Here we used the convention that $%
h_{\nu,0}=0$.

By means of (\ref{H1}) and (\ref{H3}) we have%
$$
\frac{z\mathbf{H}_{\nu }^{\prime }(z)}{\mathbf{H}_{\nu }(z)}%
=\nu+1-\sum_{n\geq1}\frac{2z^{2}}{h_{\nu,n}^{2}-z^{2}},\text{ \ \ \ }1+%
\frac{z\mathbf{H}_{\nu }^{\prime \prime }(z)}{\mathbf{H}_{\nu }^{\prime
}(z)}=\nu+1-\sum_{n\geq1}\frac{2z^{2}}{h_{\nu,n}^{\prime 2}-z^{2}}.
$$
and consequently
\begin{align*}
1+\frac{zu_{\nu}^{\prime \prime }(z)}{u_{\nu}^{\prime }(z)}&=1+\frac{z\mathbf{
H}_{\nu}^{\prime \prime }(z)}{\mathbf{H}_{\nu }^{\prime }(z)}+\left( \frac{1
}{\nu+1}-1\right) \frac{z\mathbf{H}_{\nu }^{\prime }(z)}{\mathbf{H}_{\nu }(z)
}\\&=1-\left( \frac{1}{\nu+1}-1\right) \sum_{n\geq1}\frac{2z^{2}}{h_{\nu,n}^{2}-z^{2}}%
-\sum_{n\geq1}\frac{2z^{2}}{h_{\nu,n}^{\prime 2}-z^{2}}.
\end{align*}
Now, suppose that $\nu\in \lbrack -\frac{1}{2},0].$ By using the inequality (%
\ref{s7}), for all $z\in \mathbb{D}_{h_{\nu,1}^{\prime }}$ we obtain the
inequality
$$
\Real\left( 1+\frac{zu_{\nu}''(z)}{u_{\nu}'(z)}%
\right) \geqslant 1-\left( \frac{1}{\nu+1}-1\right) \sum_{n\geq1}\frac{%
2r^{2}}{h_{\nu,n}^{2}-r^{2}}-\sum_{n\geq1}\frac{2r^{2}}{h_{\nu,n}^{\prime
2}-r^{2}},  \label{u2}
$$
where ${\left\vert z\right\vert =r.}$ Moreover, observe that if we use the
inequality (\ref{2.2}) then we get that the above inequality is also valid
when $\nu\in\left[0,\frac{1}{2}\right]$. Here we used that the zeros $h_{\nu,n}$ and $h_{\nu,n}^{\prime }$
interlace. The above inequality implies for ${r}\in \left( 0,h_{\nu,1}^{\prime
}\right) $%
\begin{equation*}
\inf_{z\in \mathbb{D}_{r}}\left\{ \Real\left( 1+\frac{zu_{\nu}^{\prime
\prime }(z)}{u_{\nu}^{\prime }(z)}\right) \right\} =1+\frac{ru_{\nu}^{\prime
\prime }(r)}{u_{\nu}^{\prime }(r)}.
\end{equation*}%
On the other hand, the function $U_{\nu}:\left( 0,h_{\nu,1}^{\prime }\right)
\longrightarrow \mathbb{R}$, defined by%
\begin{equation*}
U_{\nu}(r)=1+\frac{ru_{\nu}^{\prime \prime }(r)}{u_{\nu}^{\prime }(r)},
\end{equation*}%
is strictly decreasing since%
\begin{eqnarray*}
U_{\nu}^{\prime }(r) &=&-\left( \frac{1}{\nu+1}-1\right) \sum_{n\geq1}
\frac{4rh_{\nu,n}^{2}}{\left(h_{\nu,n}^{2}-r^{2}\right) ^{2}}%
-\sum_{n\geq1}\frac{4rh_{\nu,n}^{\prime 2}}{\left( h_{\nu,n}^{\prime
2}-r^{2}\right) ^{2}} \\
&<&\sum_{n\geq1}\frac{4rh_{\nu,n}^{2}}{\left( h_{\nu,n}^{2}-r^{2}\right)
^{2}}-\sum_{n\geq1}\frac{4rh_{\nu,n}^{\prime 2}}{\left( h_{\nu,n}^{\prime
2}-r^{2}\right) ^{2}}<0
\end{eqnarray*}%
for $\nu\in\left[0,\frac{1}{2}\right]$ and ${r}\in \left(
0,h_{\nu,1}^{\prime }\right),$ and also we have $U_{\nu}'(r)<0$ for $\nu\in\left[-\frac{1}{2},0\right]$ and $r>0.$
Here we used again that the zeros $h_{\nu,n}$
and $h_{\nu,n}^{\prime }$ interlace for all $n\in \mathbb{N}$, $\left\vert \nu
\right\vert \leq \frac{1}{2}$ and $r<\sqrt{h_{\nu,n}h_{\nu,n}^{\prime }}$ we
have that
\begin{equation*}
h_{\nu,n}^{2}\left( h_{\nu,n}^{\prime 2}-r^{2}\right) ^{2}<h_{\nu,n}^{\prime
2}\left( h_{\nu,n}^{2}-r^{2}\right) ^{2}.
\end{equation*}%
Since $\lim_{r\searrow 0}U_{\nu}(r)=1>\alpha $ and $\lim_{r\nearrow
h_{\nu,1}^{\prime }}U_{\nu}(r)=-\infty ,$ in view of the minimum principle for harmonic functions it follows that for $z\in \mathbb{D}%
_{r_{4}}$ we have%
\begin{equation*}
\Real\left( 1+\frac{zu_{\nu}^{\prime \prime }(z)}{u_{\nu}^{\prime }(z)}%
\right) >\alpha
\end{equation*}%
if and only if $r_{4}$ is the unique root of
\begin{equation*}
1+\frac{ru_{\nu}^{\prime \prime }(r)}{u_{\nu}^{\prime }(r)}=\alpha
\end{equation*}%
situated in $\left( 0,h_{\nu,1}^{\prime }\right) .$

\textbf{b)} By using
$$1+\frac{zv_{\nu }^{\prime \prime }(z)}{v_{\nu }^{\prime }(z)}=
-\nu+\frac{z\mathbf{H}_{\nu }^{\prime \prime }(z)}{\mathbf{H}_{\nu }^{\prime }(z)}=-\sum_{n\geq1}\frac{2z^{2}}{h_{\nu,n}^{\prime 2}-z^{2}}$$ and the
inequality (\ref{s7}), it follows that for all $z\in \mathbb{D}_{h_{\nu,1}^{\prime }}$ we
have
\begin{equation*}
\Real\left( 1+\frac{zv_{\nu }^{\prime \prime }(z)}{v_{\nu }^{\prime }(z)}
\right) \geqslant -\sum_{n\geq1}\frac{2r^{2}}{h_{\nu,n}^{\prime 2}-r^{2}},
\end{equation*}
where ${\left\vert z\right\vert =r.}$ Thus, for ${r}\in \left(
0,h_{\nu,1}^{\prime }\right) $ we get%
\begin{equation*}
\inf_{z\in \mathbb{D}_{r}}\left\{ \Real\left( 1+\frac{zv_{\nu }^{\prime
\prime }(z)}{v_{\nu }^{\prime }(z)}\right) \right\} =1+\frac{rv_{\nu
}^{\prime \prime }(r)}{v_{\nu }^{\prime }(r)}.
\end{equation*}%
On the other hand, the function $V_{\nu}:\left(0,h_{\nu,1}^{\prime }\right) \longrightarrow
\mathbb{R}$, defined by%
\begin{equation*}
V_{\nu}(r)=1+\frac{rv_{\nu }^{\prime \prime }(r)}{v_{\nu }^{\prime }(r)},
\end{equation*}%
is strictly decreasing \ and $\lim_{r\searrow 0}V_{\nu}(r)=1>\alpha$ and $%
\lim_{r\nearrow h_{\nu,1}^{\prime }}V_{\nu}(r)=-\infty $. Consequently, in view of the minimum principle for harmonic functions for $z\in
\mathbb{D}_{r_{5}}$ we have that
\begin{equation*}
\Real\left( 1+\frac{zv_{\nu }^{\prime \prime }(z)}{v_{\nu }^{\prime }(z)}%
\right) >\alpha
\end{equation*}%
if and only if $r_{5}$ is the unique root of
\begin{equation*}
1+\frac{rv_{\nu }^{\prime \prime }(r)}{v_{\nu }^{\prime }(r)}=\alpha
\end{equation*}%
situated in $\left( 0,h_{\nu,1}^{\prime }\right) .$

\textbf{c)} Similarly, by using
\begin{equation*}
1+\frac{zw_{\nu }^{\prime \prime }(z)}{w_{\nu }^{\prime }(z)}=-\frac{\nu}{2}+
\frac{\sqrt{z}\mathbf{H}_{\nu }^{\prime \prime }(\sqrt{z})}{2\mathbf{H}%
_{\nu }^{\prime }(\sqrt{z})}=-\sum_{n\geq1}\frac{2z}{h_{\nu,n}^{\prime 2}-z}.
\end{equation*}
and the inequality (\ref{s7}), we get for all $z\in \mathbb{D}_{h_{\nu,1}^{\prime }}$
\begin{equation*}
\Real\left( 1+\frac{zw_{\nu }^{\prime \prime }(z)}{w_{\nu }^{\prime }(z)}%
\right) \geqslant -\sum_{n\geq1}\frac{2r}{h_{\nu,n}^{\prime 2}-r},
\end{equation*}
where ${\left\vert z\right\vert =r.}$ Hence, for ${r}\in \left(
0,h_{\nu,1}^{\prime }\right)$ we obtain
\begin{equation*}
\inf_{z\in \mathbb{D}_{r}}\left\{ \Real\left( 1+\frac{zw_{\nu }^{\prime
\prime }(z)}{w_{\nu }^{\prime }(z)}\right) \right\} =1+\frac{rw_{\nu
}^{\prime \prime }(r)}{w_{\nu }^{\prime }(r)}.
\end{equation*}%
On the other hand, the function $W_{\nu}:\left( 0,h_{\nu,1}^{\prime }\right) \longrightarrow
\mathbb{R}$, defined by%
\begin{equation*}
W_{\nu}(r)=1+\frac{rw_{\nu }^{\prime \prime }(r)}{w_{\nu }^{\prime }(r)},
\end{equation*}%
is strictly decreasing and $\lim_{r\searrow 0}W_{\nu}(r)=1>\alpha$ and $%
\lim_{r\nearrow h_{\nu,1}^{\prime }}W_{\nu}(r)=-\infty .$ Consequently, in view of the minimum principle for harmonic functions for $z\in
\mathbb{D}_{r_{6}}$ we have that
\begin{equation*}
\Real\left( 1+\frac{zw_{\nu }^{\prime \prime }(z)}{w_{\nu }^{\prime }(z)}%
\right) >\alpha
\end{equation*}%
if and only if $r_{6}$ is the unique root of
\begin{equation*}
1+\frac{rw_{\nu }^{\prime \prime }(r)}{w_{\nu }^{\prime }(r)}=\alpha
\end{equation*}%
situated in $\left( 0,h_{\nu,1}^{\prime }\right) .$
\end{proof}

\section{\bf Some results on the zeros of the derivatives of Lommel and Struve functions}
\setcounter{equation}{0}

In this section our aim is to discuss the results on the derivatives of the Lommel and Struve functions of the first kind which we used in the proof of the main results. In the proof of Theorem \ref{theo1} we used the fact that the zeros of the function $s_{\mu-\frac{1}{2},\frac{1}{2}}$ and its derivative interlace when $\mu\in(0,1).$ In the same proof we also used the fact that the zeros of the function $s_{\mu-\frac{3}{2},\frac{1}{2}}$ and its derivative interlace when $\mu\in(0,1).$ In other words, this means that {\em the zeros of the Lommel function $s_{\mu-\frac{1}{2},\frac{1}{2}}$ and its derivative interlace when $\mu\in(-1,1),$ $\mu\neq0.$} We note that since the zeros $\xi_{\mu,n}$ of the Lommel function are real and simple when $\mu\in(-1,1),$ $\mu\neq0,$ the above result actually implies that {\em the zeros $\xi_{\mu,n}'$ of the function $s_{\mu-\frac{1}{2},\frac{1}{2}}'$ are also real when $\mu\in(-1,1),$ $\mu\neq0.$} This result is a natural companion to the results stated in \cite{Bar2,Kou} about the zeros of Lommel functions. Moreover, it is worth to mention that by using a famous result of P\'olya we can say more. P\'olya's result \cite{pol} is as follows:

{\em Suppose that the function $f$ is positive, increasing and continuous on $[0, 1)$ and that
$\int_{0}^{1}f(t)dt<\infty$. Then, the entire functions
$$z\longmapsto\int_{0}^{1}f(t)\sin(zt)dt\ \ \ \mbox{and} \ \ \ z\longmapsto\int_{0}^{1}f(t)\cos(zt)dt$$
have only real and simple zeros and their zeros interlace. }

In view of this result of P\'olya we can prove actually that {\em the zeros $\xi_{\mu,n}'$ of the function $s_{\mu-\frac{1}{2},\frac{1}{2}}'$ are all real and simple when $\mu\in(-1,1),$ $\mu\neq0.$} To show this we use the last expression of Lemma \ref{lem1} for $k=0$ and the derivative formula \eqref{phider} to obtain
$$\mu(\mu+1)z^{\frac{1}{2}-\mu}s_{\mu-\frac{1}{2},\frac{1}{2}}'(z)=z\varphi_0'(z)-\left(\mu+\frac{1}{2}\right)\varphi_0(z).$$
Combining this with \cite[Lemma 3]{Bar2}
$$z\varphi_0(z)=\mu(\mu+1)\int_0^1(1-t)^{\mu-1}\sin(zt)dt, \ \ \mu>0,$$
it results that
\begin{align*}
-z^{\frac{3}{2}-\mu}s_{\mu-\frac{1}{2},\frac{1}{2}}'(z)&=-\int_0^1(1-t)^{\mu-1}tz\cos(zt)dt+\left(\mu+\frac{3}{2}\right)\int_0^1(1-t)^{\mu-1}\sin(zt)dt\\
&=\int_0^1\left(\left(\mu+\frac{5}{2}\right)-\left(2\mu+\frac{3}{2}\right)t\right)(1-t)^{\mu-2}\sin(zt)dt,
\end{align*}
where in the last step we used integration by parts. Since for $t\in[0,1)$ and $\mu<1$ we have
$$k_{\mu}(t)=\left(\left(\mu+\frac{5}{2}\right)-\left(2\mu+\frac{3}{2}\right)t\right)(1-t)^{\mu-2}>0,$$
$$k_{\mu}'(t)=(1-\mu)\left(\left(\mu+\frac{7}{2}\right)-\left(2\mu+\frac{3}{2}\right)t\right)(1-t)^{\mu-3}>0,$$
we obtain that the function $k_{\mu}$ is positive and strictly increasing on $[0,1).$ Moreover, it can be shown that $\int_0^1k_{\mu}(t)dt<\infty.$ Summarizing, in view of P\'olya's result we have that {\em the zeros $\xi_{\mu,n}'$ of the function $s_{\mu-\frac{1}{2},\frac{1}{2}}'$ are all real and simple when $\mu\in(0,1).$}

Similarly as before, by using the last expression of Lemma \ref{lem1} for $k=1$ and the derivative formula 
$$\varphi_1'(z)=\mu(\mu-1)z^{-\mu-\frac{1}{2}}\left(zs_{\mu-\frac{3}{2}}'(z)-\left(\mu-\frac{1}{2}\right)s_{\mu-\frac{3}{2},\frac{1}{2}}(z)\right)$$
it follows that
$$\mu(\mu-1)z^{\frac{1}{2}-\mu}s_{\mu-\frac{3}{2},\frac{1}{2}}'(z)=z\varphi_1'(z)+\left(\mu-\frac{1}{2}\right)\varphi_1(z).$$
Combining this with \cite[Lemma 3]{Bar2}
$$\varphi_1(z)=\mu\int_0^1(1-t)^{\mu-1}\cos(zt)dt, \ \ \mu>0,$$
it results that
\begin{align*}
-(\mu-1)z^{\frac{1}{2}-\mu}s_{\mu-\frac{3}{2},\frac{1}{2}}'(z)&=\mu\int_0^1(1-t)^{\mu-1}tz\sin(zt)dt-\left(\mu-\frac{1}{2}\right)\int_0^1(1-t)^{\mu-1}\cos(zt)dt\\
&=\mu\int_0^1\left(\frac{3}{2}-\mu-\frac{1}{2}t\right)(1-t)^{\mu-2}\cos(zt)dt,
\end{align*}
where in the last step we used integration by parts. Since for $t\in[0,1)$ and $\mu<1$ we have
$$l_{\mu}(t)=\left(\frac{3}{2}-\mu-\frac{1}{2}t\right)(1-t)^{\mu-2}>0,$$
$$l_{\mu}'(t)=-\left(\frac{1}{2}(1-\mu)t-\mu^2+\frac{7}{2}\mu-\frac{5}{2}\right)(1-t)^{\mu-3}>0,$$
we obtain that the function $l_{\mu}$ is positive and strictly increasing on $[0,1).$ Moreover, it can be shown that $\int_0^1l_{\mu}(t)dt<\infty.$ Thus, we proved that {\em the zeros $\zeta_{\mu,n}'$ of the function $s_{\mu-\frac{3}{2},\frac{1}{2}}'$ are all real and simple when $\mu\in(0,1),$} that is, {\em the zeros $\xi_{\mu,n}'$ of the function $s_{\mu-\frac{1}{2},\frac{1}{2}}'$ are all real and simple when $\mu\in(-1,0).$}

It is also worth to mention that in the proof of Theorem \ref{theo2} we used the fact that {\em the zeros of the function $\mathbf{H}_{\nu}$ and its derivative interlace when $|\nu|\leq\frac{1}{2}.$} We note that since {\em the zeros $h_{\nu,n}$ of the Struve function are real and simple when $|\nu|\leq\frac{1}{2},$} the above result actually implies that {\em the zeros $h_{\nu,n}'$ of the function $\mathbf{H}_{\nu}'$ are also real when $|\nu|\leq\frac{1}{2}.$} This result is a natural companion to the results stated in \cite{steinig} about the zeros of Struve functions. Moreover, it is worth to mention also that by using P\'olya's result we can say more: {\em the zeros $h_{\nu,n}'$ of the function $\mathbf{H}_{\nu}'$ are all real and simple when $|\nu|\leq\frac{1}{2}.$}
To see this we proceed as above. If we use the recurrence relation
$$x\mathbf{H}_{\nu-1}(x)=\nu\mathbf{H}_{\nu}(x)+x\mathbf{H}_{\nu}'(x)$$
and the integral representation
$$\mathbf{H}_{\nu}(x)=\frac{2\left(\frac{x}{2}\right)^{\nu}}{\sqrt{\pi}\Gamma\left(\nu+\frac{1}{2}\right)}\int_0^1(1-t^2)^{\nu-\frac{1}{2}}\sin(xt)dt$$
we obtain that
$$-\sqrt{\pi}2^{\nu-1}\Gamma\left(\nu+\frac{1}{2}\right)x^{1-\nu}\mathbf{H}_{\nu}'(x)=\int_0^1(1-\nu-\nu t^2)(1-t^2)^{\nu-\frac{3}{2}}\sin(xt)dt.$$
Now, observe that the function $q_{\nu}:[0,1)\to (0,\infty),$ defined by
$$q_{\nu}(t)=(1-\nu-\nu t^2)(1-t^2)^{\nu-\frac{3}{2}}$$
satisfies $$q_{\nu}'(t)=-2t(1-t^2)^{\nu-\frac{5}{2}}\left(\left(\nu-\frac{3}{2}\right)(1-\nu-\nu t^2)+\nu(1-t^2)\right)\geq0$$
for all $t\in[0,1)$ and $|\nu|\leq \frac{1}{2}$ since the function $r_{\nu}:[0,1)\to\mathbb{R},$ defined by $$r_{\nu}(t)=\left(\nu-\frac{3}{2}\right)(1-\nu-\nu t^2)+\nu(1-t^2),$$ satisfies
$$r_{\nu}'(t)>0\ \ \ \Longrightarrow\ \ \ r_{\nu}(t)<r_{\nu}(1)=\left(\nu-\frac{3}{2}\right)(1-2\nu)\leq0,\ \ \ \mbox{when}\ \ \nu\in\left[0,\frac{1}{2}\right],$$
$$r_{\nu}'(t)<0\ \ \ \Longrightarrow\ \ \ r_{\nu}(t)<r_{\nu}(0)=\left(\nu-\frac{3}{2}\right)(1-\nu)+\nu\leq0,\ \ \ \mbox{when}\ \ \nu\in\left[-\frac{1}{2},0\right].$$
Thus, the function $q_{\nu}$ is positive and increasing on $[0,1).$ Moreover, it can be shown that $\int_0^1q_{\nu}(t)dt<\infty.$ These together imply that indeed {\em the zeros $h_{\nu,n}'$ of the function $\mathbf{H}_{\nu}'$ are all real and simple when $|\nu|\leq\frac{1}{2}.$}

\subsection*{Acknowledgements} The research of \'A. Baricz was supported by the Romanian National Authority for Scientific Research, CNCS-UEFISCDI, under Grant
PN-II-RU-TE-2012-3-0190. The work of both authors was initiated in September 2014 when they visited the Mathematics Department of Kafkas University in Kars, Turkey. Both authors are grateful to Murat \c{C}a\u{g}lar, Erhan Deniz and Nizami Mustafa for their kind hospitality.


\begin{thebibliography}{}

\bibitem{bdoy} \textsc{\'A. Baricz, D.K. Dimitrov, H. Orhan, N. Ya\u{g}mur},
Radii of starlikeness of some special functions, \texttt{arXiv:1406.3732}.

\bibitem{bdm} \textsc{\'A. Baricz, D.K. Dimitrov, I. Mez\H o}, Radii of starlikeness and convexity
of some $q$-Bessel functions, \texttt{arXiv:1409.0293}.

\bibitem{Bar2} \textsc{\'A. Baricz, S. Koumandos}, {Tur\'an
type inequalities for some Lommel functions of the first kind}, {\em Proc. Edinb. Math. Soc.} (in press), \texttt{%
arXiv:1308.6477}.

\bibitem{Bar3} \textsc{\'{A}. Baricz, R. Sz\'{a}sz}, {%
Close-to-convexity of some special functions and their derivatives}, {\em Bull. Malays. Math. Sci. Soc.} (in press), \texttt{%
arXiv:1402.0692}.

\bibitem{BAS} \textsc{\'{A}. Baricz, R. Sz\'{a}sz,} The
radius of convexity of normalized Bessel functions of the first kind, \emph{%
Anal. Appl.} 12(5) (2014) 485--509.

\bibitem{BPS} \textsc{\'{A}. Baricz, S. Ponnusamy, S. Singh}%
, {Tur\'{a}n type inequalities for Struve functions,} \texttt{arXiv:1401.1430%
}.

\bibitem{Kou} \textsc{S. Koumandos, M. Lamprecht}, {The zeros
of certain Lommel functions}, \emph{Proc. Amer. Math. Soc.} 140 (2012)
3091--3100.

\bibitem{LV} \textsc{B.Ya. Levin}, Lectures on Entire
Functions, Amer. Math. Soc.: Transl. of Math. Monographs, vol. 150, 1996.

\bibitem{H-Ny} \textsc{H. Orhan, N. Ya\u{g}mur}, Geometric
properties of generalized Struve functions, \emph{An. \c{S}tiin\c{t}. Univ.
Al. I. Cuza Ia\c{s}i. Mat. (N.S.)} (in press).

\bibitem{pol} \textsc{G. P\'{o}lya}, \"{U}ber die Nullstellen gewisser ganzer Funktionen, \emph{Math. Z.} 2 (1918) 352--383. 

\bibitem{skov} \textsc{H. Skovgaard}, On inequalities of the Tur\'an type, {\em Math. Scand.} 2 (1954) 65--73. 

\bibitem{steinig} \textsc{J. Steinig}, The real zeros of Struve's function, {\em SIAM J. Math. Anal.} 1(3) (1970) 365--375.

\bibitem{stein} \textsc{J. Steinig}, {The sign of Lommel's
function}, \emph{Trans. Amer. Math. Soc.} 163 (1972) 123--129.

\bibitem{Wat} \textsc{G. N. Watson}, \emph{A Treatise on the
Theory of Bessel Functions}, Cambridge Univ. Press, Cambridge, 1944.

\bibitem{N-H} \textsc{N. Ya\u{g}mur, H. Orhan}, Starlikeness
and convexity of generalized Struve functions, \emph{Abstr. Appl. Anal.}
2013 (2013) Art. 954513.
\end{thebibliography}
\end{document}